\newtheorem{Theorem}{Theorem}[section]
\newtheorem{Proposition}[Theorem]{Proposition}
\newtheorem{Lemma}[Theorem]{Lemma}
\theoremstyle{definition}
\newtheorem{Definition}{Definition}[section]
\theoremstyle{remark}
\numberwithin{equation}{section}
\newcommand{\R}{{\mathbb R}}
\newcommand{\N}{{\mathbb N}}
\begin{document}

\title[Absolutely continuous spectrum]{The absolutely continuous spectrum
of one-dimensional Schr\"odinger operators}

\author{Christian Remling}

\address{Mathematics Department\\
University of Oklahoma\\
Norman, OK 73019}

\email{cremling@math.ou.edu}

\urladdr{www.math.ou.edu/$\sim$cremling}

\date{October 29, 2007; revised: January 31, 2008}

\thanks{2000 {\it Mathematics Subject Classification.} Primary 34L40 81Q10}

\keywords{Absolutely continuous spectrum, Schr\"odinger operator,
reflectionless potential}

\thanks{to appear in \textit{Math.\ Phys.\ Anal.\ Geom.}}
\begin{abstract}
This paper deals with general structural properties of one-dimensional
Schr\"odinger operators with some absolutely continuous spectrum.
The basic result says that the $\omega$ limit points of the potential
under the shift map are reflectionless on the support of the absolutely
continuous part of the spectral measure.
This implies an Oracle Theorem for such potentials and Denisov-Rakhmanov
type theorems.

In the discrete case, for Jacobi operators,
these issues were discussed in my recent paper \cite{Remac}.
The treatment of the continuous case in the present paper depends
on the same basic ideas.
\end{abstract}
\maketitle
\section{Introduction}
This note discusses basic properties of one-dimensional Schr\"odinger operators
\[
H= -\frac{d^2}{dx^2} + V(x)
\]
with some absolutely continuous spectrum.
It is a supplement to my recent paper \cite{Remac}. In \cite{Remac}, I
dealt with the discrete case exclusively. As one would expect, the basic
ideas that were presented in \cite{Remac} can also be used to analyze
the continuous case. It is
the purpose of this paper to give such a treatment; basically, this will be
a matter of making the appropriate definitions.

Therefore, my general philosophy will be to keep this note brief.
I will assume that
the reader is familiar with at least the general outline of the discussion
of \cite{Remac} and only focus on those aspects where the extension to
the continuous case is perhaps not entirely obvious. By the same token,
I will not say much about related work here; please see again \cite{Remac}
for a fuller discussion.

Given a potential $V$,
we will consider limit points $W$ under the shift $(S_xV)(t)=V(x+t)$,
as $x\to\infty$. We will thus need a suitable topology on a suitable space of potentials.
This will naturally lead us to consider generalized Schr\"odinger operators, with
measures as potentials.

The basic result, from which everything else will follow,
is Theorem \ref{T4.1} below.
It says that the limits $W$ are necessarily reflectionless
(this notion will be defined later) on the support of the absolutely continuous part
of the spectral measure. This is a very strong condition; it severely restricts the
structure of potentials with some absolutely continuous spectrum.
As in \cite{Remac}, this result crucially depends on earlier work
of Breimesser and Pearson \cite{BP1,BP2}.

We will present two applications of Theorem \ref{T4.1} here; both
are analogs of results from \cite{Remac}. The first application
gives an easy and transparent proof of a continuous Denisov-Rakhmanov
\cite{Dencont,Den,Rakh} type theorem.

We denote by $\Sigma_{ac}$
the essential support of the absolutely continuous part of the spectral measure;
this is determined up to sets of (Lebesgue) measure zero. If
we write $\rho$ for the
spectral measure, we can define (a representative of)
$\Sigma_{ac}$ as the set where
$d\rho/dt > 0$. The absolutely continuous spectrum, $\sigma_{ac}$, may be
obtained from $\Sigma_{ac}$ by taking the essential closure. The essential
spectrum, $\sigma_{ess}$, can be defined as the set of accumulation points of
the spectrum.
\begin{Theorem}
\label{TDR}
Let $V$ be a uniformly locally integrable (half line) potential (that is, we assume
that $\sup_n \int_n^{n+1} |V(x)|\, dx < \infty$). Suppose that
\[
\sigma_{ess}=\Sigma_{ac} = [0, \infty ) .
\]
Then
\[
\lim_{x\to\infty} \int V(x+t)\varphi(t)\, dt = 0
\]
for every continuous $\varphi$ of compact support.
\end{Theorem}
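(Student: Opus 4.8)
The plan is to run the standard dynamical-systems argument: describe the $\omega$-limit set of $V$ under the shift, use Theorem \ref{T4.1} to see that every limit point is reflectionless on the whole half-line $[0,\infty)$, and then invoke the rigidity of reflectionless operators with no spectral gaps to conclude that the $\omega$-limit set is just $\{0\}$. The convergence $\int V(x+t)\varphi(t)\,dt\to 0$ is then automatic from compactness of the orbit.

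In more detail, I would first fix the ambient space. The hypothesis $\sup_n\int_n^{n+1}|V|<\infty$ says that the measures $V(x+t)\,dt$, $x\ge 0$, have uniformly bounded variation on every compact interval, so the orbit $\{S_xV:x\ge 0\}$ is precompact in the space of generalized (measure) potentials equipped with the local weak-$*$ topology set up earlier in the paper (bounded sets in this topology are compact and metrizable). Hence the $\omega$-limit set $\omega(V)=\{W:\ S_{x_n}V\to W\ \text{for some}\ x_n\to\infty\}$ is nonempty and compact, and it suffices to show $\omega(V)=\{0\}$: precompactness of the orbit then forces $S_xV\to 0$ as $x\to\infty$ in this topology, which, unravelling the definition of the topology, is exactly the assertion that $\int V(x+t)\varphi(t)\,dt\to 0$ for every continuous $\varphi$ of compact support.

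Now let $W\in\omega(V)$. Two facts pin it down. First, since $W$ is a right limit of $V$, its spectrum is controlled by the essential spectrum of $H_V$: one has $\sigma(H_W)\subseteq\sigma_{ess}(H_V)=[0,\infty)$ (the continuum analog of the corresponding Jacobi statement; in the measure-potential setting this is again a matter of checking the appropriate definitions). Second, Theorem \ref{T4.1} applies with $\Sigma_{ac}=\Sigma_{ac}(H_V)=[0,\infty)$, so $W$ is reflectionless on all of $[0,\infty)$. Both hypotheses are genuinely needed here: reflectionlessness on $[0,\infty)$ alone does not force $W=0$ (e.g.\ the one-soliton potential $-2/\cosh^2 x$ is reflectionless on $[0,\infty)$), and it is precisely the condition $\sigma_{ess}=[0,\infty)$, via $\sigma(H_W)\subseteq[0,\infty)$, that excludes negative spectrum of $W$.

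The heart of the matter is then the rigidity statement: a whole-line operator $H_W$ with $\sigma(H_W)\subseteq[0,\infty)$ that is reflectionless on all of $[0,\infty)$ must be the free operator, i.e.\ $W=0$. I would deduce this from the structure of reflectionless operators. Reflectionlessness on a set makes that set absolutely continuous spectrum of full multiplicity, so together with $\sigma(H_W)\subseteq[0,\infty)$ we get $\sigma(H_W)=[0,\infty)$; and the set of whole-line operators that are reflectionless on a fixed $E$ with spectrum $E$ is a compact isospectral torus whose dimension equals the number of bounded gaps of $E$. For $E=[0,\infty)$ there are none, so the torus is a single point, the free operator. Alternatively, one can argue directly with the half-line Weyl $m$-functions: reflectionlessness forces $m_+(\lambda+i0)=-\overline{m_-(\lambda+i0)}$ for a.e.\ $\lambda>0$, and this, combined with the Herglotz property and analyticity off $[0,\infty)$, pins down $m_\pm(z)=\pm i\sqrt z$, whence $W=0$ by the uniqueness part of inverse spectral theory. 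I expect this rigidity step — executed carefully in the generalized setting and with the unbounded spectrum $[0,\infty)$ rather than a compact $E$ — to be the main obstacle; the remaining ingredients (precompactness, persistence of the essential spectrum under right limits, and the elementary topology and dynamics) are comparatively soft.
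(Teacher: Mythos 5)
Your outline matches the paper's proof of Theorem~\ref{T4.2} (which subsumes Theorem~\ref{TDR}) step for step: pass to the $\omega$-limit set, use Proposition~\ref{P4.1} to get $\sigma(H_W)\subset[0,\infty)$ for every $W\in\omega(V)$, use Theorem~\ref{T4.1} to get $W\in\mathcal R([0,\infty))$, establish the rigidity statement that the only such $W$ is the zero potential, and conclude by compactness of the orbit. Your second route through the Weyl functions is exactly the paper's route; your first route (isospectral torus of dimension $=$ number of bounded gaps) is not really an alternative for $E=[0,\infty)$, since in that degenerate case the ``torus is a point'' assertion \emph{is} precisely the rigidity statement being proved, so invoking it would be circular.

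Two places in your $m$-function sketch are too compressed and conceal the actual work. First, ``analyticity of $m_\pm$ off $[0,\infty)$'' is not a direct consequence of $\sigma(H_W)\subseteq[0,\infty)$ --- the half-line operators defined by cutting at $x$ can a priori carry spectrum below $0$ even when the whole-line operator does not, as your own one-soliton example illustrates on one half line. The paper avoids this by working with the combined Herglotz function $H=m_++m_-=-1/G(x,x;\cdot)$, for which analyticity and negativity on $(-\infty,0)$ do follow from $\sigma\subset[0,\infty)$ via the spectral theorem. Second, knowing the phase of the boundary values of $H$ a.e.\ (negative on $(-\infty,0)$, purely imaginary on $(0,\infty)$ by reflectionlessness) only determines $H$ up to a positive multiplicative constant through the exponential Herglotz representation; the paper then pins down the constant using the high-energy asymptotics $m_\pm(x,-\kappa^2)=-\kappa+o(1)$, and only afterwards splits $H$ back into $m_+$ and $m_-$ by comparing the Herglotz representation measures and using reflectionlessness a second time to get $g_+=g_-=1/2$. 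Without the asymptotic normalization (and without the detour through $H$), ``Herglotz $+$ reflectionless $+$ support in $[0,\infty)$'' does not determine $m_\pm$, since $c\sqrt{-z}$ works for every $c>0$. These are the genuine technical points your sketch would need to fill in; the overall strategy is correct.
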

Denisov proved this earlier \cite[Theorem 2]{Dencont}, under the somewhat stronger
assumption that $V$ is bounded.

The conclusion of Theorem \ref{TDR}
says that $V(x)$ tends to zero as $x\to\infty$ in weak $*$ sense
(more precisely, it is the sequence of \textit{measures} $V(x+t)\, dt$ that converges).
It will become clear later that this mode of convergence is natural here. Also,
examples of the type $V=U^2+U'$ with a rapidly decaying, but oscillating $U$
show that stronger modes of convergence of $V$ can not be expected.

Theorem \ref{TDR} will be proved in Section 4.
As in \cite[Theorem 1.8]{Remac}, it should
be possible to use the same technique to establish an analogous result for
finite gap potentials and spectra (and beyond), but we will not pursue this theme here.

Let us now discuss a second structural consequence of Theorem \ref{T4.1};
in \cite{Remac}, I introduced the designation \textit{Oracle Theorem}
for statements of this type. The Oracle Theorem
says that for operators with absolutely continuous spectrum, it is possible to
approximately predict future values of the potential, with arbitrarily high accuracy,
based on information about past values.

The precise formulation will involve measures $\mu$ as potentials and some
additional technical devices; these
will of course be explained in more detail later. To get a preliminary impression
of what the Oracle Theorem is saying, it is possible to replace $\mu$ by
a (uniformly locally integrable) potential $V$ in Theorem \ref{OT} below.

We will work with spaces $\mathcal V^C_J$ of
signed Borel measures $\mu$ on intervals $J$. For now, we can pretend that
a measure $\mu$ is in $\mathcal V^C_J$ if $|\mu|(J)\le C|J|$, but, for inessential
technical reasons, the actual definition will be slightly different.
If endowed with the weak $*$ topology,
these spaces $\mathcal V^C_J$ are compact and in fact metrizable.
The metric $d$ that is used below arises in this way. We will also use a
similarly defined space $\mathcal V^C$ of measures on $\R$. See Section 2
for the precise definitions.

Finally, $S_x\mu$ will denote the shift by $x$
of the measure $\mu$, that is,
\begin{equation}
\label{Sxmu}
\int f(t)\, d(S_x\mu)(t) = \int f(t-x)\, d\mu(t) .
\end{equation}
If $d\mu = V\, dt$ is a locally integrable potential $V$, then this reduces to
the shift map $(S_xV)(t)=V(x+t)$ that was introduced above.
\begin{Theorem}[The Oracle Theorem]
\label{OT}
Let $A\subset\R$ be a Borel set of positive (Lebesgue) measure, and let
$\epsilon>0$, $a, b\in\R$ ($a<b$), $C>0$. Then there exist $L>0$ and a continuous function
(the {\em oracle})
\[
\Delta : \mathcal V^C_{(-L,0)} \to \mathcal V^C_{(a,b)}
\]
so that the following holds. If $\mu\in\mathcal V^C$ and the half line operator
associated with $\mu$ satisfies $\Sigma_{ac}\supset A$, then there exists an
$x_0>0$ so that for all $x\ge x_0$, we have that
\[
d\left( \Delta\left( \chi_{(-L,0)}S_x\mu \right) , \chi_{(a,b)} S_x\mu \right)
< \epsilon .
\]
\end{Theorem}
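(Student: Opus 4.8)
The plan is to deduce Theorem~\ref{OT} from Theorem~\ref{T4.1} by a compactness argument whose one genuinely analytic ingredient is a unique continuation principle for reflectionless measures. Let $N\subset\mathcal V^C$ be the set of whole-line measures that are reflectionless on $A$; since being reflectionless on a fixed set is a closed condition and $\mathcal V^C$ is compact, $N$ is compact. The key claim is that the restriction map $r\colon N\to\mathcal V^C_{(-\infty,0)}$, $\nu\mapsto\chi_{(-\infty,0)}\nu$, is injective. Indeed, if $\nu_1,\nu_2\in N$ agree on $(-\infty,0)$, then their left half-line $m$-functions at $0$ coincide, $m_-^{(1)}=m_-^{(2)}=:m_-$; the reflectionless condition $m_+^{(j)}(\lambda+i0)=-\overline{m_-(\lambda+i0)}$ for a.e.\ $\lambda\in A$ then shows that the Herglotz functions $m_+^{(1)},m_+^{(2)}$ have the same boundary values on the positive-measure set $A$. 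Since $m_+^{(1)}-m_+^{(2)}$ belongs to the Nevanlinna class, this forces $m_+^{(1)}=m_+^{(2)}$, hence $\nu_1=\nu_2$. Being a continuous injection of a compact space into a Hausdorff space, $r$ is then a homeomorphism onto its image.

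From this I would extract the finite-memory version that the oracle needs: for every $\epsilon>0$ there are $L>0$ and $\delta>0$ such that, for all $\nu_1,\nu_2\in N$,
\[
d\bigl(\chi_{(-L,0)}\nu_1,\chi_{(-L,0)}\nu_2\bigr)<\delta
\ \Longrightarrow\
d\bigl(\chi_{(a,b)}\nu_1,\chi_{(a,b)}\nu_2\bigr)<\epsilon .
\]
This follows by contradiction: a violating sequence $\nu_1^{(n)},\nu_2^{(n)}\in N$ with $L=n$, $\delta=1/n$ subconverges, by compactness of $N$, to $\nu_1,\nu_2\in N$ which must agree on $(-\infty,0)$ — any fixed test function is eventually supported in $(-n,0)$ — but disagree on $(a,b)$, contradicting injectivity of $r$.

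Now I would build the oracle. Put $N_L=\{\chi_{(-L,0)}\nu:\nu\in N\}$, a compact subset of $\mathcal V^C_{(-L,0)}$. The displayed implication says that the (a priori multivalued) assignment $\chi_{(-L,0)}\nu\mapsto\chi_{(a,b)}\nu$ on $N_L$ is $\epsilon$-close to a well-defined $\delta$-uniformly continuous map into the convex set $\mathcal V^C_{(a,b)}$. A standard mollification — a finite $\delta$-net of $N_L$, a subordinate partition of unity on the compact metric space $\mathcal V^C_{(-L,0)}$, and the associated convex combination of the target values — then yields a continuous $\Delta\colon\mathcal V^C_{(-L,0)}\to\mathcal V^C_{(a,b)}$ with $d\bigl(\Delta(\chi_{(-L,0)}\nu),\chi_{(a,b)}\nu\bigr)<\epsilon/2$ for every $\nu\in N$.

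Finally I would check that $\Delta$ works, using Theorem~\ref{T4.1}. Fix $\mu\in\mathcal V^C$ with $\Sigma_{ac}\supset A$ for its half-line operator, and suppose the conclusion fails, so that $d\bigl(\Delta(\chi_{(-L,0)}S_{x_n}\mu),\chi_{(a,b)}S_{x_n}\mu\bigr)\ge\epsilon$ along some $x_n\to\infty$. By compactness pass to a subsequence with $S_{x_n}\mu\to\nu$ in $\mathcal V^C$; then $\nu$ is an $\omega$-limit point of $\mu$, so by Theorem~\ref{T4.1} it is reflectionless on $\Sigma_{ac}\supset A$, i.e.\ $\nu\in N$. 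Passing to the limit through the restrictions and through $\Delta$, the left-hand side tends to $d\bigl(\Delta(\chi_{(-L,0)}\nu),\chi_{(a,b)}\nu\bigr)<\epsilon/2$, a contradiction. I expect the principal difficulty to be the unique continuation step: pinning down precisely what ``reflectionless on $A$'' yields for measure potentials at the level of boundary values of the Weyl $m$-functions, and invoking the correct boundary uniqueness theorem for the Nevanlinna class. The remaining points — continuity of the restriction maps $\chi_J$ (which is why $\mathcal V^C$ is defined without atoms) and the compatibility of the metrics on the various $\mathcal V^C_J$ — are bookkeeping that should be handled by the definitions of Section~2.
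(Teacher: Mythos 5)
Your proposal takes essentially the same route as the paper: the compactness, injectivity, and uniform‑continuity package you describe for the restriction map on reflectionless measures is precisely Proposition~\ref{P4.2} (with the injectivity resting on the same boundary‑uniqueness fact for Herglotz functions, which you phrase via the Nevanlinna class and Lusin--Privalov), and your mollification‑plus‑verification outline matches Steps~1--4 of the paper's proof, with your contradiction formulations replacing the paper's direct ones but yielding the same content.

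There is one technical point your mollification step skips. After building the partition of unity, $\Delta(\sigma)$ is a convex combination of the target values $\chi_{(a,b)}\nu_j$, and you assert that since each of these is within $\epsilon$ of $\chi_{(a,b)}\nu$, so is the combination. But balls in the metric $d$ from \eqref{defd} are \emph{not} convex — the truncation $t\mapsto t/(1+t)$ destroys convexity — so this inference does not come for free. The paper supplies exactly the needed control as Lemma~\ref{L4.2}, which gives the weaker but adequate bound $d\bigl(\mu,\sum w_j\nu_j\bigr)<6\epsilon\ln\epsilon^{-1}$ and compensates by demanding $\epsilon^2$-accuracy in Step~1. You would need either this lemma, or to replace $d$ on $\mathcal V^C_{(a,b)}$ by a metric of the form $\sum 2^{-n}\rho_n$ (which converges on bounded $J$ because the $\rho_n$ are uniformly bounded there, and which \emph{does} have convex balls), before the bound $d\bigl(\Delta(\chi_{(-L,0)}\nu),\chi_{(a,b)}\nu\bigr)<\epsilon/2$ follows from the partition‑of‑unity construction. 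The rest of your sketch is sound; in particular, the parenthetical remark about ``$\mathcal V^C$ defined without atoms'' should instead read that continuity of restriction is achieved by using test functions supported in the \emph{open} interval, which is what Section~2's definition of $\mathcal V^C_J$ provides.
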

In other words, for large enough $x$, we can approximately determine
the potential on $(x+a,x+b)$ from its values on $(x-L,x)$, and the function (oracle)
that does this prediction is in fact independent of the potential.
Moreover, by adjusting $a,b$, we can
also specify in advance how far the oracle should look into the future.
\section{Topologies on spaces of potentials}
We need a topology on a suitable set of potentials that makes this space compact
and also interacts well with other basic objects such as $m$ functions. This is
easy to do if we are satisfied with working with potentials that obey a local
$L_p$ condition with $p>1$. Indeed, for every $p>1$ (and $C>0$), we can define
\[
\mathcal V_p^C = \left\{ V:\mathbb R \to\mathbb R : \int_n^{n+1} |V(x)|^p\, dx \le C^p
\textrm{ for all }n\in\mathbb Z \right\} .
\]
Closed balls in $L_p$ are compact in the weak $*$ topology if $p>1$; in fact, these
compact topological spaces are metrizable. Pick such metrics $d_n$; in other words,
if $W_j, W\in L_p(n,n+1)$, $\|W_j\|_p, \|W\|_p \le C$,
then $d_n(W_j,W)\to 0$ precisely if $W_j\to W$ in the
weak $*$ topology, that is, precisely if
\[
\int_n^{n+1} W_j(x)g(x)\, dx \to \int_n^{n+1} W(x)g(x)\, dx
\quad\quad (j\to\infty)
\]
for all $g\in L_q(n,n+1)$, where $1/p+1/q = 1$. Then, using these metrics, define,
for $V,W\in\mathcal V_p^C$
\[
d(V,W) = \sum_{n=-\infty}^{\infty} 2^{-|n|} \frac{d_n(V_n,W_n)}{1+d_n(V_n,W_n)} ;
\]
here $V_n, W_n$ denote the restrictions of $V,W$ to $(n,n+1)$.

This metric generates the product topology on $\mathcal V_p^C$, where this space
is now viewed as the product of the closed balls of radius $C$ in $L_p(n,n+1)$.
In particular, $(\mathcal V_p^C , d)$ is a compact metric space.

This simple device allows us to establish continuous analogs of the results of
\cite{Remac} without much difficulty at all, but it is unsatisfactory
because the most natural and general local condition on the potentials
is an $L_1$ condition. Since $L_1$ is not a dual space, we will then
need to consider measures to make an analogous approach work. Thus we define
\[
\mathcal V^C = \left\{ \mu\in\mathcal M (\mathbb R) : |\mu|(I) \le C \max \{ |I|, 1 \}
\textrm{ for all intervals } I\subset\mathbb R \right\} .
\]
Here, $\mathcal M (\mathbb R)$ denotes the set of (signed) Borel measures on $\mathbb R$.
We can now proceed as above to define a metric on $\mathcal V^C$: Pick a countable
dense (with respect to $\|\cdot \|_{\infty}$) subset $\{ f_n: n\in\N\}\subset
C_c(\mathbb R)$, the continuous functions of compact support, and put
\[
\rho_n(\mu,\nu) = \left| \int f_n(x)\, d(\mu-\nu)(x) \right| .
\]
Then define the metric $d$ as
\begin{equation}
\label{defd}
d(\mu,\nu) = \sum_{n=1}^{\infty} 2^{-n} \frac{\rho_n(\mu,\nu)}{1+\rho_n(\mu,\nu)} .
\end{equation}
Clearly, $d(\mu_j,\mu)\to 0$ if and only if
\[
\int f(x)\, d\mu_j(x) \to \int f(x)\, d\mu(x)\quad\quad (j\to\infty)
\]
for all $f\in C_c(\mathbb R)$. Moreover, $(\mathcal V^C, d)$ is a compact space.
To prove this, let $\mu_n\in\mathcal V^C$. By the Banach-Alaoglu Theorem, closed
balls in $\mathcal M([-R,R])$ are compact. Use this and a diagonal process to find
a subsequence $\mu_{n_j}$ with the property that
$\int f\, d\mu_{n_j}\to\int f\,d\mu$ for all $f\in C_c(\mathbb R)$, for some
$\mu\in\mathcal M(\R)$. The proof can now be completed by noting that a
measure $\nu\in\mathcal M(\mathbb R)$ is in $\mathcal V^C$ if and only if
\[
\left| \int f(x)\, d\nu(x) \right| \le C\max\{ \textrm{diam}(\textrm{supp }f), 1 \} \|f\|_{\infty}
\]
for all $f\in C_c(\R)$.

The same construction can be run if $\mathbb R$ is replaced by an interval $J$,
and these spaces, which we will denote by $\mathcal V^C_J$, will also play an
important role later on.
\section{Schr\"odinger operators with measures}
We are thus led to consider Schr\"odinger operators with measures as
potentials; therefore, we must now clarify what the precise meaning of this
object is. There is, of course, a considerable amount of previous work
on these issues; see, for example, \cite{AGHKH,BARem,BEKS,BFT} and the references
cited therein. Here, we will follow the approach
of \cite{BARem}. Actually, Schr\"odinger \textit{operators} will not play a
central role in this paper, at least not explicitly. Therefore, we will only indicate how
to make sense out of the Schr\"odinger \textit{equations}
$-f''+\mu f=zf$. We can then use these to define Titchmarsh-Weyl $m$ functions,
spectral measures etc., and we refer the reader to \cite{BARem} for the
(straightforward) definition of domains that yield self-adjoint operators.

There are two obvious attempts, and these conveniently lead to the same result: If
$I\subset\mathbb R$ is an open interval and $f\in C(I)$,
we can call $f$ a solution to the Schr\"odinger equation
\begin{equation}
\label{se}
-f'' + f\mu = zf
\end{equation}
if \eqref{se} holds in the sense of distributions on $I$. Alternatively,
one can work with the \textit{quasi-derivative}
\[
(Af)(x) = f'(x) - \int_{[0,x]} f(t)\, d\mu(t) ;
\]
if $x<0$, then $\int_{[0,x]}$ needs to be replaced with $-\int_{(x,0)}$ here. We now say that
$f$ solves \eqref{se} on $I$ if both $f$ and $Af$ are (locally)
absolutely continuous and $-(Af)'=zf$ on $I$. This
new definition is motivated
by the observation that, at least formally, $(Af)'=f''-f\mu$.

A slight modification of the argument from the proof of \cite[Theorem 2.4]{BARem} then
shows that this latter interpretation of \eqref{se} is
equivalent to the equation holding in $\mathcal D'(I)$. The basic observation here is that
if $f$ is continuous, then
$(Af)' = f'' - f\mu$ in $\mathcal D'$, not only formally.

Note that if $f$ solves \eqref{se}, then $f'$ is of bounded variation and the
jumps can only occur at the atoms of $\mu$.

If $\mu\in\mathcal V^C$, we have limit point case at both endpoints. This means
that for $z\in\mathbb C^+$ (the upper half plane in $\mathbb C$), there exist unique
(up to a factor) solutions $f_{\pm}(x,z)$ of \eqref{se} on $\mathbb R$
satisfying $f_-\in L_2(-\infty,0)$,
$f_+\in L_2(0,\infty)$. The Titchmarsh-Weyl $m$ functions of the problems on
$(-\infty,x)$ and $(x,\infty)$, with Dirichlet boundary conditions at
$x$ ($u(x)=0$), are now defined as follows:
\begin{equation}
\label{defM}
m_{\pm}(x,z) = \pm \frac{f_{\pm}'(x,z)}{f_{\pm}(x,z)}
\end{equation}
We will use this formula only for points $x$ with $\mu(\{x\})=0$ so that the possible
discontinuities of $f'$ cannot cause any problems here.

\begin{Definition}
\label{D3.1}
Let $A\subset\mathbb R$ be a Borel set.
We call a potential $\mu\in\mathcal V^C$ \textit{reflectionless} on $A$ if
\begin{equation}
\label{3.2}
m_+(x,t)= -\overline{m_-(x,t)} \quad\quad \textrm{for almost every }t\in A
\end{equation}
for some $x\in\mathbb R$ with $\mu(\{ x\} )=0$.

The set of reflectionless potentials $\mu\in\bigcup_{C>0}\mathcal V^C$ on $A$ is
denoted by $\mathcal R(A)$.
\end{Definition}
This is a key notion for everything that follows. If we have \eqref{3.2} for
some $x$, then we automatically get this equation at \textit{all} points of
continuity of $\mu$. Moreover, the exceptional set implicit in \eqref{3.2} can
be taken to be independent of $x$. To prove these remarks, observe that if
$m_{\pm}(x,t)\equiv\lim_{y\to 0+} m_{\pm}(x,t+iy)$ exists for some $x,t\in\mathbb R$,
then this limit exists for all $x$ (and the same $t$). Moreover, as a function
of $x$, the $m$ functions are of bounded variation and (using distributional derivatives)
\[
\pm \frac{d}{dx} m_{\pm} = \mu - z - m^2_{\pm} .
\]
The claim now follows by considering $(d/dx)(m_++\overline{m_-})(x,t)$.

The other key notion is that of the $\omega$ limit set of a potential
$\mu\in\mathcal V^C$ under the shift map. This was already mentioned in the
introduction, and we can now give the more precise definition
\[
\omega(\mu) = \bigl\{ \nu\in\mathcal V^C:
\textrm{ There exist $x_n\to\infty$ so that }d(S_{x_n}\mu, \nu ) \to 0 \bigr\} .
\]
For the definition of the shifted measures $S_x\mu$, see \eqref{Sxmu}.
Typically, $\mu$ will be given as a half line potential $V$, but it is of course easy
to interpret $V$ as an element $d\mu = V\, dx$ of $\mathcal V^C$ (such a $\mu$ automatically
gives zero weight to $(-\infty,0]$).

The compactness of $\mathcal V^C$ ensures that $\omega(\mu)$ is non-empty, compact,
and invariant under $\{ S_x: x\in\mathbb R\}$. Moreover, and in contrast to the
discrete case, $\omega(\mu)$ is also connected because we now have a \textit{flow} $S_x$.
\section{Main results and their proofs}
It will be useful to introduce $\mathcal V^C_+$
as the set of all $\mu\in\mathcal V^C$
with $|\mu|((-\infty,0])=0$. These measures will serve as the potentials
of half line problems on $(0,\infty)$. We can think of such a $\mu$ as a measure on $(0,\infty)$
or on $\R$.
\begin{Theorem}
\label{T4.1}
Let $\mu\in\mathcal V^C_+$. Then $\omega(\mu) \subset \mathcal R(\Sigma_{ac})$.
\end{Theorem}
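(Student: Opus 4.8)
The plan is to follow the strategy of the discrete case from \cite{Remac}, which in turn rests on the Breimesser--Pearson theory of value distributions of Herglotz functions \cite{BP1,BP2}. Fix $\nu \in \omega(\mu)$, so there are $x_n \to \infty$ with $d(S_{x_n}\mu, \nu) \to 0$. I must show $\nu$ is reflectionless on $\Sigma_{ac}$, i.e. that for (almost) every point $x$ of continuity of $\nu$ one has $m_+^\nu(x,t) = -\overline{m_-^\nu(x,t)}$ for a.e.\ $t \in \Sigma_{ac}$, where $\Sigma_{ac}$ here refers to the essential support of the a.c.\ part of the spectral measure of the \emph{original} half-line problem for $\mu$. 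The key analytic input is that the Breimesser--Pearson result characterizes reflectionlessness in terms of the asymptotic behavior of the $m$-functions $m_\pm(x_n, \cdot)$ as $x_n \to \infty$: on $\Sigma_{ac}$, the boundary values of $m_+(x,t+i0)$ and $-\overline{m_-(x,t+i0)}$ must be asymptotically equal in the sense of value distribution (equality of the $\omega$-limits of the associated measures on subsets of $\mathbb{C}^+$).

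The steps, in order, would be: (1) Establish continuous dependence of the $m$-functions on the potential in the topology of $\mathcal V^C$: if $d(S_{x_n}\mu, \nu) \to 0$ then $m_\pm(x_n + \cdot\,, z)$ (suitably interpreted as the half-line $m$-functions for the shifted potentials) converge to $m_\pm^\nu(\cdot, z)$, locally uniformly in $z \in \mathbb{C}^+$ and at points of continuity of $\nu$. This uses the ODE interpretation of \eqref{se} via the quasi-derivative $Af$, together with the fact that weak-$*$ convergence of the measure potentials implies convergence of solutions to the initial value problem (this is where the $L_1$-type condition and the measure formalism of \cite{BARem} pay off). (2) Invoke the Breimesser--Pearson machinery: since $\Sigma_{ac}$ is (an essential support of) the a.c.\ part for the half-line problem on $(0,\infty)$, the value distributions of $m_+(x, \cdot)$ and of $-\overline{m_-(x,\cdot)}$ on $\Sigma_{ac}$ become asymptotically equal as $x \to \infty$; this is the continuous analog of the central estimate used in \cite{Remac}. (3) Combine (1) and (2): passing to the limit along $x_n$, the value distributions of $m_+^\nu$ and $-\overline{m_-^\nu}$ on $\Sigma_{ac}$ coincide, and by the Breimesser--Pearson characterization (equality of value distributions on a positive-measure set forces pointwise equality of boundary values a.e.\ there) we conclude $m_+^\nu(x,t) = -\overline{m_-^\nu(x,t)}$ for a.e.\ $t \in \Sigma_{ac}$. (4) Finally, use the remark following Definition \ref{D3.1} to upgrade the relation at one point of continuity to all such points, so $\nu \in \mathcal R(\Sigma_{ac})$.

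The main obstacle I expect is step (1): making the continuity of $x \mapsto m_\pm$ (and the convergence along $\omega$-limits) rigorous in the measure setting. One must control the dependence of solutions of $-f'' + f\,d\mu = zf$ on $\mu$ when $\mu$ only converges weak-$*$ and may carry atoms, so that $f'$ jumps; handling points $x_n$ where $S_{x_n}\mu$ or $\nu$ has an atom (which must be excluded, and shown to be avoidable) is a technical nuisance. Once the convergence of $m$-functions is in hand, the Breimesser--Pearson step is essentially a black box imported from \cite{BP1,BP2} and the argument proceeds exactly as in the Jacobi case \cite{Remac}; the conceptual content is entirely in recognizing that weak-$*$ convergence of potentials is the right notion to make the $m$-function map continuous while remaining compatible with compactness of $\mathcal V^C$.
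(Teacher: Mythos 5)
Your proposal follows essentially the same route as the paper: shift, establish locally uniform convergence of $m_\pm$ under weak-$*$ convergence of the potential (the paper's Lemma \ref{L4.1}), feed this into the Breimesser--Pearson value-distribution result together with the Herglotz-theoretic step from \cite[Theorem 2.1]{Remac}, and conclude the reflectionless relation, upgrading from one point of continuity to all via the remark after Definition \ref{D3.1}. The one technicality you do not flag, but which the paper mentions, is that the original Breimesser--Pearson theorem uses an $m_-$ for a \emph{bounded} interval, so one must either prove a two-half-line version or adapt Lemma \ref{L4.1} to $m$-functions on growing bounded intervals; this is a small adjustment, not a gap in the overall strategy.
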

Here, $\Sigma_{ac}$ denotes an essential support of the absolutely continuous
part of the spectral measure of the \textit{half line} problem on $(0,\infty)$ (say).

Theorem \ref{T4.1} is proved in the same way as the analogous result (Theorem 1.4) from \cite{Remac}.
Therefore, we will only make a few quick remarks and then leave the matter at that.

First of all, note that although the original result of Breimesser and Pearson
\cite[Theorem 1]{BP1} is formulated for Schr\"odinger operators with locally integrable
potentials, the same proof also establishes the result for operators with measures as
potentials. Indeed, one never works with the potential itself but only with solutions
to the Schr\"odinger equation \eqref{se} or with transfer matrices. See also \cite[Appendix A]{Remac}.

As a second ingredient, we need continuous dependence of the
(half line) $m$ functions $m_{\pm}$ on the potential.
\begin{Lemma}
\label{L4.1}
Let $\mu_n,\mu\in\mathcal V^C$ and suppose that $d(\mu_n,\mu)\to 0$. Fix
$x\in\mathbb R$ with $\mu_n(\{x\})=\mu(\{ x\})=0$. Then
\[
m_{\pm}(x,z;\mu_n) \to m_{\pm}(x,z;\mu) ,
\]
uniformly on compact subsets of $\mathbb C^+$.
\end{Lemma}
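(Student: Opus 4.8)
The plan is to reduce convergence of the $m$-functions to convergence of solutions of the Schr\"odinger equation \eqref{se}, and then to prove the latter by a fixed-point/Gr\"onwall argument that is stable under weak-$*$ convergence of the potential. Recall that $m_\pm(x,z;\mu)$ is built from the $L_2$ solution $f_\pm(\cdot,z;\mu)$ on a half line via \eqref{defM}. Since we are in the limit point case at $\pm\infty$ for all $\mu\in\mathcal V^C$, the function $m_+(x,z;\mu)$ (say) is the unique value $w\in\mathbb C^+$ for which the solution $u$ of \eqref{se} with $u(x)=1$, $(Au)(x)=-w$ lies in $L_2(x,\infty)$; equivalently, $m_+$ is the limit of the Weyl disks $D_b(x,z;\mu)$ as $b\to\infty$, where $D_b$ is the set of boundary values $-(Au)(x)/u(x)$ for solutions $u$ satisfying the Dirichlet condition $u(b)=0$. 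The strategy is: (i) show that for fixed $b$, the Weyl disk $D_b(x,z;\mu_n)$ converges (say in Hausdorff distance) to $D_b(x,z;\mu)$, uniformly on compacts in $\mathbb C^+$; (ii) show that the convergence $D_b\to\{m_+\}$ as $b\to\infty$ is uniform in $n$; then (iii) a standard $\epsilon/3$ argument delivers $m_+(x,z;\mu_n)\to m_+(x,z;\mu)$, uniformly on compact subsets of $\mathbb C^+$. The argument for $m_-$ is symmetric.

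For step (i), fix $b>x$ and consider the transfer matrix picture on $[x,b]$. A solution $f$ of \eqref{se} is determined by the initial data $(f(x),(Af)(x))$, and in terms of the vector $F=(f,Af)^t$ the equation \eqref{se} becomes the first-order system $F' = \bigl(\begin{smallmatrix} 0 & 1\\ -z\,dt & d\mu\end{smallmatrix}\bigr)F$, which we read as the integral equation
\[
F(t) = F(x) + \int_{(x,t]} \begin{pmatrix} 0 & 1\\ -z & 0\end{pmatrix} F(s)\, ds + \int_{(x,t]} \begin{pmatrix} 0 & 0\\ 0 & 1\end{pmatrix} F(s)\, d\mu(s).
\]
(Here one is careful about the values at atoms of $\mu$, but by hypothesis $\mu_n(\{x\})=\mu(\{x\})=0$, and in any case the endpoint behavior is controlled as in \cite{BARem}.) The point is that solving this is a contraction/Picard iteration whose iterates are continuous functionals of $\mu$ in exactly the weak-$*$ sense encoded by $d$: each iterate is obtained by integrating the previous (continuous) iterate against $\mu$, and $\int g\,d\mu_n \to \int g\,d\mu$ for continuous $g$. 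The uniform bound $|\mu_n|(I)\le C\max\{|I|,1\}$ furnishes an $n$-independent Gr\"onwall-type bound on $\|F\|$ on $[x,b]$ and on the total variation of $F$, and in particular guarantees that the families are equi-bounded and equi-(locally-)absolutely-continuous, hence (Helly/Arzel\`a--Ascoli) precompact. Combining precompactness with uniqueness of the limiting solution forces the transfer matrices $T_n(b,x;z)$ to converge to $T(b,x;z)$, locally uniformly in $z$. Since the Weyl disk $D_b$ is the image of $\mathbb R\cup\{\infty\}$ (the Dirichlet data at $b$) under a M\"obius transformation built from $T_n(b,x;z)$, convergence of the disks follows, and a routine argument upgrades it to local uniformity in $z\in\mathbb C^+$.

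For step (ii), one uses the standard quantitative estimate for the shrinking of Weyl disks: the radius of $D_b(x,z;\mu)$ equals $\bigl(2\operatorname{Im}z\,\int_x^b |u_b|^2\bigr)^{-1}$ for a suitable solution $u_b$, and more usefully one has the classical inclusion $D_{b'}\subset D_b$ for $b'>b$ together with a bound on $\operatorname{diam} D_b$ in terms of $\int_x^b |f_D|^2$, $\int_x^b|f_N|^2$ for the Dirichlet/Neumann solutions at $x$. Because all the relevant solutions obey $n$-independent bounds on $[x,b]$ (from the Gr\"onwall estimate above, which depends only on $C$, $b-x$, and a compact $z$-set), these diameters go to $0$ as $b\to\infty$ at a rate independent of $n$. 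This is precisely the uniformity needed to conclude. I expect step (ii) — pinning down that the disks shrink at an $n$-uniform rate — to be the main obstacle, because it is the one place where one needs more than soft compactness: one needs the $L_2$ growth of the Weyl solution to be controlled uniformly, and this has to be extracted from the uniform local bound on $|\mu_n|$ rather than from any a priori knowledge about the limit. Once (i) and (ii) are in hand, (iii) is a one-line $\epsilon/3$ estimate, and the uniform-on-compacts conclusion of the Lemma follows. (In fact, since everything here parallels the discrete case, one may alternatively cite the argument in \cite[Appendix A]{Remac} and merely indicate the modifications needed to pass from $V\,dx$ to a general $\mu\in\mathcal V^C$, the essential new input being that the Picard iteration integrates continuous functions against $\mu$, on which weak-$*$ convergence acts continuously.)
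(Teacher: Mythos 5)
Your overall strategy (transfer matrices and solutions of the integral form of \eqref{se} converge under weak-$*$ convergence of the potential; approximate $m_\pm$ by $m$ functions on bounded intervals, i.e.\ by Weyl disks) is the same as the paper's, which refers to the integral-equation argument of \cite[Lemma 6.3]{BARem} and to bounded-interval approximations. Step (i) is fine, modulo the usual care to choose the right endpoint $b$ to avoid atoms of $\mu$.

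However, your step (ii) is both flawed as sketched and, more to the point, unnecessary. The flaw: the Gr\"onwall estimate yields an \emph{upper} bound on solutions on $[x,b]$, whereas the radius formula $r_b(z)=\bigl(2\,\mathrm{Im}\,z\int_x^b|v(t,z)|^2\,dt\bigr)^{-1}$ (with $v$ the Dirichlet solution at $x$) requires a \emph{lower} bound on $\int_x^b|v|^2$ that tends to infinity; nothing in the uniform local bound $|\mu_n|(I)\le C\max\{|I|,1\}$ gives such a lower bound at a rate independent of $n$. The reason this does not matter: the Weyl disks are nested, $D_{b'}\subset D_b$ for $b'>b$, and in the limit point case $m_+(x,z;\mu_n)\in D_b(x,z;\mu_n)$ for every $b$. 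So given a compact $K\subset\mathbb C^+$ and $\epsilon>0$, first fix $b$ so large that $\mathrm{diam}\,D_b(x,z;\mu)<\epsilon$ for all $z\in K$ (possible since $\mu$ alone is limit point and, e.g., by Dini, the decrease $r_b(z)\downarrow 0$ is uniform on $K$); then step (i) gives, for $n$ large and all $z\in K$, that $D_b(x,z;\mu_n)$ is Hausdorff-close to $D_b(x,z;\mu)$, whence $m_+(x,z;\mu_n)\in D_b(x,z;\mu_n)$ and $m_+(x,z;\mu)\in D_b(x,z;\mu)$ are within $2\epsilon$ of each other. No uniform-in-$n$ control on disk shrinkage is used. (Alternatively, once pointwise convergence is known, one can invoke the normal family property of Herglotz functions together with the a priori bound $m_+(x,\cdot;\mu_n)\in D_b(x,\cdot;\mu_n)$ for a single fixed $b$ to upgrade to locally uniform convergence.) With step (ii) deleted and replaced by the nestedness observation, your argument is correct and agrees with the paper's.
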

This follows because convergence in $\mathcal V^C$ implies weak $*$ convergence
of the restrictions of the measures to compact intervals, at least if the endpoints
of these intervals are not atoms of $\mu$. It then follows that
the solutions to the Schr\"odinger
equation converge, locally uniformly in $z$. This is most conveniently established by rewriting
the Schr\"odinger equation as an integral equation. See, for example, \cite[Lemma 6.3]{BARem}
for more details. One can now use \eqref{defM} to obtain the Lemma. In fact, it is also
helpful to approximate $m_{\pm}$ by $m$ functions of problems on \textit{bounded} intervals.
This allows us to work with solutions that satisfy a fixed initial condition.

To prove Theorem \ref{T4.1}, fix $\nu\in\omega(\mu)$. By definition of the
$\omega$ limit set, there exists a sequence $x_j\to\infty$ so that $S_{x_j}\mu\to\nu$
in $(\mathcal V^C , d)$. Fix $x\in\mathbb R$ with $\mu(\{x+x_j\})=\nu(\{ x\})=0$. By Lemma \ref{L4.1},
\[
m_{\pm}(x+x_j,z;\mu) \to m_{\pm}(x,z;\nu) \quad\quad (j\to\infty) ,
\]
locally uniformly in $z$. The Breimesser-Pearson Theorem \cite[Theorem 1]{BP1} (see also
\cite[Theorem 3.1]{Remac}) together with \cite[Theorem 2.1]{Remac}
then yield a relation between $m_+(x,z;\nu)$ and
$m_-(x,z;\nu)$ which turns out to be equivalent to the condition from Definition \ref{D3.1},
with $A=\Sigma_{ac}$. This last part of the argument is identical with the corresponding
treatment of \cite{Remac}.

Honesty demands that I briefly comment on a technical (and relatively insignificant point)
here: To run the argument in precisely this form, one needs a
slight modification of either Lemma \ref{L4.1} or the original Breimesser-Pearson Theorem.
The easiest solution would be to prove the Breimesser-Pearson Theorem for two
half line $m$ functions $m_{\pm}$ (in the original version
from \cite{BP1,BP2}, $m_-$ refers to a bounded interval).
Alternatively, one can use a variant of Lemma \ref{L4.1} where the approximating $m$
functions may be associated with bounded (but growing) intervals.

Let us now show how Theorem \ref{T4.1} can be used to produce Denisov-Rakhmanov type theorems.
We will automatically obtain the following slightly more general version of Theorem \ref{TDR}.
\begin{Theorem}
\label{T4.2}
Let $\mu\in\mathcal V^C_+$, and
suppose that the half line operator generated by $\mu$ satisfies
\[
\sigma_{ess} = \Sigma_{ac} = [0,\infty) .
\]
Then $d(S_x\mu,0)\to 0$ as $x\to\infty$.
\end{Theorem}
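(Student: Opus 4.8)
The plan is to deduce the statement from Theorem \ref{T4.1} by showing that, under the hypotheses, the $\omega$ limit set of $\mu$ collapses to the single point $0$. First I would record the standard reduction: if $d(S_x\mu,0)\not\to 0$, there are $\epsilon>0$ and $x_n\to\infty$ with $d(S_{x_n}\mu,0)\ge\epsilon$; by compactness of $\mathcal V^C$ a subsequence of the $S_{x_n}\mu$ converges, and its limit is an element $\nu\in\omega(\mu)$ with $d(\nu,0)\ge\epsilon$. So it suffices to prove $\omega(\mu)=\{0\}$. Fix $\nu\in\omega(\mu)$. Theorem \ref{T4.1} gives $\nu\in\mathcal R(\Sigma_{ac})=\mathcal R([0,\infty))$, that is, $m_+(x,t+i0)=-\overline{m_-(x,t+i0)}$ for a.e.\ $t>0$, at every $x$ with $\nu(\{x\})=0$. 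In addition, the whole line operator $H_\nu$ satisfies $\sigma(H_\nu)\subset[0,\infty)$: this is the usual semicontinuity of the spectrum under $\omega$ limits. Indeed, if $z<0$ were in $\sigma(H_\nu)$, one could pick a normalized, compactly supported approximate eigensequence for $H_\nu$ at $z$; since $S_{x_j}\mu\to\nu$ forces the corresponding Schr\"odinger equations (equivalently, transfer matrices) to converge locally uniformly, these functions, translated far to the right, would produce a singular Weyl sequence for the half line operator of $\mu$, forcing $z\in\sigma_{ess}=[0,\infty)$, a contradiction.

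Next I would show that any $\nu\in\mathcal V^C$ that is reflectionless on $[0,\infty)$ and has $\sigma(H_\nu)\subset[0,\infty)$ must be $0$. Since $H_\nu\ge 0$, the half line operators on $(x,\infty)$ and $(-\infty,x)$ with a Dirichlet condition at $x$ are nonnegative too, so both $m_\pm(x,\cdot)$ are Herglotz functions analytic on $\mathbb{C}\setminus[0,\infty)$, in particular real on $(-\infty,0)$; moreover $m_+(x,z)+m_-(x,z)$ has no zero off $[0,\infty)$ (a zero on $(-\infty,0)$ would be a pole there of the diagonal Green function, contradicting $\sigma(H_\nu)\subset[0,\infty)$). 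Consider the diagonal Green function $G(x,z)=(m_+(x,z)+m_-(x,z))^{-1}$, analytic on $\mathbb{C}\setminus[0,\infty)$. The reflectionless condition makes $m_+(x,t+i0)+m_-(x,t+i0)$, and hence $G(x,t+i0)$, purely imaginary for a.e.\ $t>0$; since $\sqrt z$ changes sign and $G(x,\cdot)$ passes to its complex conjugate across $(0,\infty)$, the two boundary values of $\sqrt z\,G(x,z)$ on $(0,\infty)$ then agree, so $\sqrt z\,G(x,z)$ continues analytically across $(0,\infty)$ to a function analytic on $\mathbb{C}\setminus\{0\}$ with purely imaginary boundary values on all of $\mathbb{R}\setminus\{0\}$. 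The universal leading Weyl asymptotics $m_\pm(x,z)=i\sqrt z\,(1+o(1))$, uniformly in $x$ (here the bound $|\mu|(I)\le C\max\{|I|,1\}$ enters), give $\sqrt z\,G(x,z)\to 1/(2i)$ as $z\to\infty$, and a short argument shows $\sqrt z\,G(x,z)$ stays bounded near $z=0$ (this uses that a reflectionless $\nu$ carries no eigenvalue at the spectral edge $0$). Hence $\sqrt z\,G(x,z)$ is entire and bounded, so by Liouville it is the constant $1/(2i)$, i.e.\ $G(x,z)=1/(2i\sqrt z)$, the free Green function, for every $x$. Equivalently $m_+(x,z)+m_-(x,z)=2i\sqrt z$; differentiating in $x$ and using $\pm\frac{d}{dx}m_\pm=\mu-z-m_\pm^2$ gives $0=m_-^2-m_+^2$, so $m_+=m_-=i\sqrt z$ everywhere, and feeding this back into the same relation yields $0=\mu-z-(i\sqrt z)^2=\mu$, i.e.\ $\nu=0$. (Alternatively, one may simply invoke the known fact that the only potential reflectionless on $[0,\infty)$ with spectrum contained in $[0,\infty)$ is the free one, i.e.\ that the ``isospectral torus'' of a half line is a single point.)

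The main obstacle is the rigidity step of the second paragraph; within it the genuinely delicate point is controlling $m_\pm(x,z)$, equivalently $G(x,z)$, near the spectral edge $z=0$, so that the Liouville argument applies -- this is where one must combine the reflectionless property (to rule out a threshold eigenvalue) with $\nu\in\mathcal V^C$; the branch bookkeeping for $\sqrt z$ near $[0,\infty)$ is routine but needs to be done carefully. The semicontinuity $\sigma(H_\nu)\subset\sigma_{ess}$ also deserves an explicit (if standard) proof, since it is exactly what excludes soliton-type limit points, which are reflectionless on $[0,\infty)$ yet carry a negative eigenvalue. Once $\omega(\mu)=\{0\}$ is established, Theorem \ref{TDR} follows as the special case $d\mu=V\,dx$: the conclusion $d(S_x\mu,0)\to0$ is precisely weak$*$ convergence of the measures $V(x+t)\,dt$ to $0$, which unwinds to $\int V(x+t)\varphi(t)\,dt\to0$ for every continuous $\varphi$ of compact support.
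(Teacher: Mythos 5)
Your overall architecture matches the paper's: reduce to showing $\omega(\mu)=\{0\}$, combine Theorem \ref{T4.1} with a spectral-inclusion result for $\omega$ limits, and then prove a rigidity statement for reflectionless potentials with $\sigma\subset[0,\infty)$. Two of your steps, however, diverge from the paper in ways worth noting.

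For the spectral inclusion $\sigma(H_\nu)\subset\sigma^+_{ess}(\mu)$, you propose translating compactly supported approximate eigenfunctions of $H_\nu$ to build a Weyl sequence for the half-line operator of $\mu$. This is exactly the route the paper deliberately avoids: because the potentials here are \emph{measures}, a function in the operator domain must satisfy jump conditions for $f'$ at the atoms of the potential, and a function built from the atom pattern of $\nu$ will not in general lie in the domain of the operator with potential $S_{x_j}\mu$, even when $S_{x_j}\mu\to\nu$ weakly. The paper's Proposition \ref{P4.1} sidesteps this by observing that the \emph{whole-line} operators $-d^2/dx^2+S_{x_j}\mu$ converge to $-d^2/dx^2+\nu$ in strong resolvent sense (this follows from the same considerations as Lemma \ref{L4.1}), then invoking \cite[Theorem VIII.24(a)]{RS1} to get $\sigma(\nu)\subset\sigma(\mu)$, and finally noting that $\omega(\mu)$ and $\sigma^-_{ess}$ are insensitive to modifications of $\mu$ on a left half line, so one may sharpen to $\sigma(\nu)\subset\sigma^+_{ess}(\mu)$. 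If you want to keep a Weyl-sequence argument you would at least need to explain how to perturb the test functions to accommodate the atoms of $S_{x_j}\mu$; the resolvent-convergence route is cleaner.

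For the rigidity step, you use a reflection/Liouville argument for $\sqrt z\,G(x,z)$, which in content is the exponential Herglotz representation the paper invokes for $H=m_++m_-=-1/G(x,x;z)$. The paper's formulation is technically smoother precisely at the place you flag: knowing $\arg H(t+i0)$ a.e.\ ($\pi$ on $(-\infty,0)$, $\pi/2$ on $(0,\infty)$) determines $\ln H$ by a Poisson integral with bounded density, which automatically forces the $\sqrt{-z}$ behavior at the edge $z=0$ with no separate estimate needed; your version instead must rule out a pole of $\sqrt z\,G$ at the origin by hand, and you only assert that a ``short argument'' does this. After normalizing the constant via the Weyl asymptotics $m_\pm(x,-\kappa^2)=-\kappa+o(1)$, your Riccati step (differentiating the $x$-independent $m_++m_-=2i\sqrt z$ and using $\pm\,dm_\pm/dx=\nu-z-m_\pm^2$ to conclude $m_+=m_-$ and then $\nu=0$) is a genuinely different, and rather slick, finish: the paper instead decomposes the Herglotz measures of $m_\pm$ as $g_\pm\,d\rho$ with $g_++g_-=1$, uses reflectionlessness a second time to get $g_+=g_-=1/2$, and then appeals to $m\Rightarrow V$ uniqueness from \cite{BARem}. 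Your route uses reflectionlessness only once and avoids the inverse spectral theorem; in exchange, you must be a bit careful that the Riccati identity holds distributionally and that $m_+=m_-$ can be read off from $m_-^2-m_+^2=0$ together with $m_++m_-\ne 0$ on $\C^+$, which it can. So: right idea, same skeleton, two details (semicontinuity and the behavior at $z=0$) where the paper's choices are the ones that actually close the argument in the measure setting.
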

The proof will also depend on the following observation (whose discrete analog was pointed
out in \cite{LS}; see also \cite{LS2}).
\begin{Proposition}
\label{P4.1}
Let $\mu\in\mathcal V^C_+$ and assume that $\nu\in\omega(\mu)$. Then
\[
\sigma(\nu) \subset \sigma^+_{ess}(\mu) .
\]
\end{Proposition}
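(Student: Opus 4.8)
The plan is to show that if $\lambda \notin \sigma^+_{ess}(\mu)$, then $\lambda \notin \sigma(\nu)$ for every $\nu \in \omega(\mu)$. Fix such a $\lambda$. Since the essential spectrum of the half line operator is the set of accumulation points of the spectrum, there is an open interval $(\lambda - \delta, \lambda + \delta)$ containing at most finitely many points of $\sigma^+(\mu)$; shrinking $\delta$, we may assume $(\lambda - 2\delta, \lambda + 2\delta) \cap \sigma^+(\mu) = \emptyset$, so that the half line $m$ functions $m_\pm(x, \cdot\,; \mu)$ extend analytically across the real interval $(\lambda - 2\delta, \lambda + 2\delta)$, with $m_\pm$ real there. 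Equivalently, the Schr\"odinger equation $-f'' + f\mu = z f$ has, for $z$ in a complex neighborhood of that interval, solutions decaying at $+\infty$ with no singularities in $z$; this is the standard characterization of spectral gaps via the (non-)existence of square-integrable solutions.

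The key step is to transport this to $\nu$ using the continuity Lemma \ref{L4.1} together with the shift structure. Pick $x_j \to \infty$ with $S_{x_j}\mu \to \nu$ in $(\mathcal V^C, d)$, and fix a base point $x$ with $\nu(\{x\}) = 0$ and $\mu(\{x + x_j\}) = 0$ for all $j$ (possible after adjusting $x$, since each $\mu$ has only countably many atoms). For $z$ in the complex neighborhood $N$ of $(\lambda - 2\delta, \lambda + 2\delta)$ described above, the half line $m$ functions $m_\pm(x + x_j, z; \mu)$ are defined and uniformly bounded: indeed, they are Herglotz functions on $\mathbb C^+$ whose representing measures (the spectral measures of the half line truncations) give no mass to a fixed neighborhood of $\lambda$ because that neighborhood avoids $\sigma^+(\mu)$ and spectra only shrink under restriction to subintervals. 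Hence $m_\pm(x + x_j, \cdot\,; \mu)$ is a normal family on $N$; combined with Lemma \ref{L4.1}, which gives convergence on compact subsets of $\mathbb C^+$, the limit $m_\pm(x, \cdot\,; \nu)$ extends analytically to $N$ and is real on $(\lambda - 2\delta, \lambda + 2\delta)$. A Herglotz function that continues analytically and real-valuedly across a real interval has a representing measure that vanishes there; applied to $m_\pm(x, \cdot\,; \nu)$, this says the half line spectral measures of $\nu$ (at the base point $x$, from both sides) give no mass to $(\lambda - \delta, \lambda + \delta)$. Since $\sigma(\nu)$ is the closure of the union of the supports of these two half line spectral measures (glued at $x$), we conclude $\lambda \notin \sigma(\nu)$.

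The main obstacle is the uniform bound on $m_\pm(x + x_j, z; \mu)$ on a fixed complex neighborhood of $\lambda$, independent of $j$ — without it, Lemma \ref{L4.1} alone (which only asserts convergence on compact subsets of $\mathbb C^+$, not persistence of a spectral gap) does not rule out the gap closing up in the limit. This is exactly where $\lambda \notin \sigma^+_{ess}(\mu)$ does its work: the \emph{essential} spectrum, not merely the spectrum, must be absent, because the half line problems on $(x + x_j, \infty)$ are restrictions of the original half line problem and their spectra are contained in $\sigma^+(\mu)$ up to finitely many eigenvalues accumulating nowhere in the gap — so one gets a genuinely uniform resolvent bound near $\lambda$. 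Once that bound is in hand, the rest is the routine Herglotz/normal-families argument sketched above, and the identification of $\sigma(\nu)$ with the support of the whole line spectral measure is standard.
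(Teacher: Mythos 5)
Your proposal has two genuine gaps, both located in the step where you claim a uniform bound on $m_{\pm}(x+x_j,z;\mu)$ near $\lambda$.

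First, the $m_-$ claim is simply false. Since $\mu\in\mathcal V^C_+$, the measure vanishes on $(-\infty,0]$, so the left half-line problem on $(-\infty, x+x_j)$ sees zero potential there and has essential spectrum containing $[0,\infty)$ \emph{regardless} of $\sigma^+_{ess}(\mu)$. If $\lambda>0$ lies in a gap of $\sigma^+_{ess}(\mu)$, the representing measure of $m_-(x+x_j,\cdot;\mu)$ is typically absolutely continuous near $\lambda$, not zero, so $m_-$ does not extend real-analytically across $(\lambda-2\delta,\lambda+2\delta)$ and is not part of a normal family there. Second, the $m_+$ claim rests on the assertion that ``spectra only shrink under restriction to subintervals,'' which is also false: restricting the half line to $(x+x_j,\infty)$ preserves $\sigma^+_{ess}$ but can create new discrete eigenvalues in the gap, and as $x_j$ varies these eigenvalues move through the gap (classically, they traverse it monotonically as the endpoint moves) and can accumulate at $\lambda$. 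So $m_+(x+x_j,\cdot;\mu)$ can have poles approaching $\lambda$, and the normal-families step fails. There is also the smaller issue that $\lambda\notin\sigma^+_{ess}(\mu)$ does not let you shrink $\delta$ so that $(\lambda-2\delta,\lambda+2\delta)$ avoids $\sigma^+(\mu)$: $\lambda$ could itself be a discrete eigenvalue.

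The paper's proof avoids all of this by working with the whole-line operators instead of individual half-line $m$ functions: $S_{x_j}\mu\to\nu$ gives strong resolvent convergence, the shifted whole-line operators are unitarily equivalent to the one with potential $\mu$, and then \cite[Theorem VIII.24(a)]{RS1} gives $\sigma(\nu)\subset\sigma(\mu)$ (whole line). The remaining extraneous spectrum --- $\sigma^-_{ess}$ and discrete eigenvalues --- is killed by noting that $\omega(\mu)$ is unchanged if $\mu$ is modified on any left half line, which leaves $\sigma^-_{ess}$ and the discrete spectrum ``completely at our disposal.'' That freedom is exactly what your $m$-function argument was missing; without it, neither the $m_-$ difficulty nor the wandering-eigenvalue difficulty can be fixed.
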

Here, $\sigma(\nu)$ is the spectrum of $-d^2/dx^2+\nu$ on $L_2(\R)$, while
$\sigma^+_{ess}(\mu)$ denotes the essential spectrum of the \textit{half line} operator
$-d^2/dx^2+\mu$ on $L_2(0,\infty)$ (say).
\begin{proof}
In the discrete case, this followed from a quick argument using
Weyl sequences. In the continuous case, this device is not as easily implemented
because of domain questions. The following alternative argument avoids these issues and
thus seems simpler: Suppose that
$d(S_{x_j}\mu, \nu)\to 0$. Then the
whole line (!) operators associated with $S_{x_j}\mu$ converge in strong resolvent
sense to the (whole line) operator generated by $\nu$. To prove this fact, one can argue as
in Lemma \ref{L4.1} above.

Since the operators with shifted potentials
are unitarily equivalent to the operator generated by $\mu$ itself, it follows from
\cite[Theorem VIII.24(a)]{RS1} that $\sigma(\nu)\subset\sigma(\mu)$. The $\omega$ limit
set does not change if $\mu$ is modified on a left half line; any discrete eigenvalue, however,
can be moved (or removed) by such a modification. Similarly,
$\sigma_{ess}=\sigma^+_{ess}\cup\sigma^-_{ess}$, and $\sigma^-_{ess}$ is completely at our
disposal, so we actually obtain the stronger claim
of the Proposition.
\end{proof}
\begin{proof}[Proof of Theorem \ref{T4.2}]
We will show that $\omega(\mu)$ consists of the zero potential only. This will imply
the claim because the distance between $S_x\mu$ and $\omega(\mu)$ must go to zero as $x\to\infty$.

By Theorem \ref{T4.1} and Proposition \ref{P4.1}, any $\nu\in\omega(\mu)$ must satisfy
\begin{equation}
\label{4.3}
\sigma(\nu)=[0,\infty), \quad \nu\in\mathcal R((0,\infty)) .
\end{equation}
Here, we use the (well known) fact that $\textrm{Im }m_{\pm}>0$ almost everywhere on $A$
if the corresponding potential is reflectionless on this set. Indeed, \eqref{3.2} shows
that otherwise we would have $m_++m_-=0$ on a set of positive measure, hence everywhere,
but this is clearly impossible.

What we will actually prove now is that only the zero potential, $\nu=0$,
satisfies \eqref{4.3}. This argument follows a familiar pattern; see, for example,
\cite{CGHL} or \cite{SiSt} (especially Lemma 4.6 and the discussion that follows)
for similar arguments in somewhat different situations.

Suppose that $\nu\in\mathcal V^C$ obeys \eqref{4.3}, and fix $x\in\R$ with
$\nu(\{x\})=0$. Let $m_{\pm}$ be the $m$ functions of $H_{\nu}=-d^2/dt^2+\nu$ on
$L_2(x,\infty)$ and $L_2(-\infty,x)$, respectively,
and consider the function
\[
H(z) = m_+(x,z)+m_-(x,z) = -\frac{W(f_+,f_-)}{f_+(x,z)f_-(x,z)} .
\]
Here, $W(u,v)=uv'-u'v$ denotes the Wronskian. This last expression identifies $H$ as
\[
H(z) = - \frac{1}{G(x,x;z)} ,
\]
the negative reciprocal of the (diagonal of the) Green function of $H_{\nu}$. Compare, for example,
\cite[Section 9.5]{CodLev}. (Of course, this reference does not discuss Schr\"odinger
operators with measures, but the rather elementary argument based on the variation of
constants formula generalizes without any difficulty.)

The defining property of $G$ is given by
\[
\left( (H_{\nu}-z)^{-1}\varphi \right)(x) = \int_{-\infty}^{\infty} G(x,y;z)\varphi(y)\, dy .
\]
This holds for $z\notin\sigma(\nu)=[0,\infty)$, $\varphi\in L_2(\R)$. The spectral theorem
shows that if $z=-t<0$, then
\[
\langle \varphi, (H_{\nu}+t)^{-1} \varphi \rangle = \int_{[0,\infty)}
\frac{d\|E(s)\varphi\|^2}{s+t} > 0 .
\]
Since $G$ is continuous in $x,y$, this implies that
$G(x,x,t)\ge 0$, thus $H(t)<0$ for $t<0$. Furthermore,
the fact that $\nu\in\mathcal R((0,\infty))$ implies that $\textrm{Re }H(t)=0$ for
almost every $t>0$.

So we know the phases of the boundary
values of the Herglotz function $H$ almost everywhere.
By the exponential Herglotz representation
(or, synonymously, the Herglotz representation of $\ln H(z)$), this determines $H$
up to a (positive) multiplicative constant. Since $H_0(z)=(-z)^{1/2}$ has the properties
described above, this says that for suitable $c>0$,
\begin{equation}
\label{Hz}
H(z) = c\sqrt{-z} = -\frac{c}{\sqrt{2}}+
\frac{c}{\pi} \int_0^{\infty}\left( \frac{1}{t-z} - \frac{t}{t^2+1}\right)
t^{1/2}\, dt .
\end{equation}
We will now need some information on the large $z$ asymptotics of $m$ functions.
This subject has been analyzed in considerable depth; see, for example, \cite{Atk,GS,HKS,Ryb}.
Of course, the treatment of these references needs to be adjusted here to cover the
case of Schr\"odinger operators with measures, but this is easy to do, especially since
we will only need the rather unsophisticated estimate
\[
m_{\pm}(x,-\kappa^2) = -\kappa + o(1) \quad\quad (\kappa\to\infty) .
\]
Here, it is important that we assumed that $\nu(\{ x\} )=0$.

Since $H=m_++m_-$, it now follows that $c=2$ in \eqref{Hz}.
Furthermore, the measures from the Herglotz representations
of $m_{\pm}$ are absolutely continuous with respect to the measure
\[
d\rho(t)= \frac{2}{\pi} \chi_{(0,\infty)}(t) t^{1/2}\, dt
\]
from the Herglotz representation \eqref{Hz} of $H$. In fact, we can write
\[
m_{\pm}(x,z) = A_{\pm} + \int \left( \frac{1}{t-z} - \frac{t}{t^2+1}\right)
g_{\pm}(t)\, d\rho(t) ,
\]
with $A_{\pm}\in\R$, $A_++A_-=-\sqrt{2}$,
and, more importantly, $0\le g_{\pm} \le 1$ and $g_+ + g_- =1$. More can be said here:
Since $\nu$ is reflectionless on $(0,\infty)$, we can use \eqref{3.2} to deduce
that $\textrm{Im }m_+(x,t)=
\textrm{Im }m_-(x,t)$ for almost every $t>0$. But for
almost every $t>0$, we have that $\textrm{Im }m_{\pm}(x,t)= g_{\pm}(t) (2/\pi) t^{1/2}$,
thus $g_+=g_-=1/2$ almost everywhere.
It now follows that
\[
m_{\pm}(x,z) = \sqrt{-z} .
\]
But $m_0=\sqrt{-z}$ is the $m$ function for
zero potential, thus $\nu=0$, as desired.
This last step is a basic result in inverse spectral theory for potentials
($m$ determines $V$); here, we
of course need a version for measures, but this extension poses no difficulties. See,
for instance, \cite[Theorem 6.2(b)]{BARem} (this needs to be combined with the fact
that $m$ determines $\phi$, but this is also discussed in \cite{BARem}).
\end{proof}
It remains to prove the Oracle Theorem. We prepare for this by making a couple
of new definitions. First of all, put
\[
\mathcal R^C(A) = \mathcal R(A) \cap \mathcal V^C .
\]
Next, we consider again spaces of half line potentials, and we now think
of these as restrictions of measures
$\mu\in\mathcal V^C$:
\begin{align*}
\mathcal V^C_+ & = \{ \chi_{(0,\infty)}\mu : \mu\in\mathcal V^C \}, \\
\mathcal V^C_- & = \{ \chi_{(-\infty,0)}\mu: \mu\in\mathcal V^C \}
\end{align*}
I emphasize that on $\mathcal V^C_{\pm}$, we do \textit{not} use the topology that is
induced by $\mathcal V^C\supset\mathcal V^C_{\pm}$. That would quite obviously be a bad idea
because it would make the restriction map $\mu\mapsto \chi_{(0,\infty)}\mu$ discontinuous;
consider, for example, the sequence $\mu_n = \delta_{1/n}$. Instead, we just observe
that in the notation from Section 2, we can identify
$\mathcal V^C_+=\mathcal V^C_J$, where $J=(0,\infty)$,
and we use the topology and metric described in Section 2. In other words, if
we denote this metric by $d_+$, then $d_+(\mu_n,\mu)\to 0$ if and only if
\[
\int f(x)\, d\mu_n(x) \to \int f(x) \, d\mu(x) \quad\quad (n\to\infty)
\]
for all continuous $f$ whose support is a compact subset of $(0,\infty)$. Similar remarks
apply to $\mathcal V^C_-$, of course.

Now the restriction maps $\mathcal V^C\to\mathcal V^C_{\pm}$ are continuous,
and the spaces $(\mathcal V^C_{\pm}, d_{\pm})$ are compact.

Finally, we introduce
\begin{align*}
\mathcal R^C_+(A) & = \{ \chi_{(0,\infty)}\mu : \mu\in\mathcal R^C(A) \}
\subset\mathcal V^C_+ , \\
\mathcal R^C_-(A) & = \{ \chi_{(-\infty,0)}\mu : \mu\in\mathcal R^C(A) \}
\subset\mathcal V^C_- ,
\end{align*}
and we use the same metrics $d_{\pm}$ on these spaces also.
With this setup, we now obtain
statements that are analogs of \cite[Proposition 4.1]{Remac}.
\begin{Proposition}
\label{P4.2}
Let $A\subset\R$ be a Borel set of positive measure, and fix $C>0$. Then:\\
(a) $\left( \mathcal R^C(A), d \right)$ and $\left( \mathcal R^C_{\pm}(A), d_{\pm} \right)$
are compact spaces;\\
(b) The restriction maps
\begin{align*}
\mathcal R^C(A) \to \mathcal R^C_+(A), & \quad \mu\mapsto \chi_{(0,\infty)}\mu ;\\
\mathcal R^C(A) \to \mathcal R^C_-(A), & \quad \mu\mapsto \chi_{(-\infty,0)}\mu
\end{align*}
are continuous and bijective (and thus homeomorphisms).\\
(c) The inverse map
\[
\mathcal R^C_-(A)\to \mathcal R^C(A), \quad \chi_{(-\infty,0)}\mu\mapsto\mu
\]
is (well defined, by part (b), and) uniformly continuous.
\end{Proposition}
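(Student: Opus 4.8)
The plan is to deduce the whole statement from two facts: (i) $\mathcal R^C(A)$ is a closed subset of the compact space $(\mathcal V^C,d)$, and (ii) the restriction maps in part (b) are injective. Granting these, part (a) is immediate: $\mathcal R^C(A)$ is then compact, and since the maps $\mathcal V^C\to\mathcal V^C_\pm$ are continuous, $\mathcal R^C_\pm(A)$ are continuous images of a compact space, hence compact. For (b), the maps are continuous (being restrictions of $\mathcal V^C\to\mathcal V^C_\pm$) and surjective by the definition of $\mathcal R^C_\pm(A)$, so with (ii) they become continuous bijections from a compact space onto a metric space, hence homeomorphisms. For (c), the inverse map is continuous by (b) and its domain $\mathcal R^C_-(A)$ is a compact metric space, hence the map is uniformly continuous. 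So all the substance lies in (i) and (ii), and both are statements about the rigidity of the reflectionless condition.

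For (i), let $\mu_n\to\mu$ in $\mathcal V^C$ with each $\mu_n$ reflectionless on $A$, and fix $x$ in the co-countable set of points of continuity common to $\mu$ and all $\mu_n$. By Lemma \ref{L4.1}, $m_\pm(x,\cdot;\mu_n)\to m_\pm(x,\cdot;\mu)$ locally uniformly on $\mathbb C^+$, and hence so do the functions $H_n(z):=m_+(x,z;\mu_n)+m_-(x,z;\mu_n)$, which are Herglotz. The plan is to pass the reflectionless condition to the limit through the boundary phase of $H_n$. For a reflectionless potential one has $\textrm{Im }m_\pm>0$ a.e.\ on $A$ (see the proof of Theorem \ref{T4.2}), so \eqref{3.2} gives $H_n(t+i0)=2i\,\textrm{Im }m_-(x,t;\mu_n)\in i(0,\infty)$, i.e.\ $\textrm{Arg }H_n(t+i0)=\pi/2$, for a.e.\ $t\in A$. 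Now write the exponential Herglotz representation
\[
\ln H_n(z)=c_n+\int\Bigl(\frac{1}{t-z}-\frac{t}{t^2+1}\Bigr)\frac{\xi_n(t)}{\pi}\,dt ,
\]
where $\xi_n(t)=\textrm{Arg }H_n(t+i0)\in[0,\pi]$ and there is no singular part or linear term because $\textrm{Im}\ln H_n$ is bounded. The reflectionless condition reads $\xi_n=\pi/2$ a.e.\ on $A$. Since $\ln H_n\to\ln H$ locally uniformly the representing measures converge vaguely, and the uniform bound $0\le\xi_n\le\pi$ improves this to $\xi_n\rightharpoonup\xi$ weakly in $L^2_{\mathrm{loc}}$; testing against $\chi_B$ for $B\subset A$ then gives $\xi=\pi/2$ a.e.\ on $A$, that is, $\textrm{Re }H(t+i0)=0$ a.e.\ on $A$, where $H=m_+(x,\cdot;\mu)+m_-(x,\cdot;\mu)$. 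Carrying this out at every common point of continuity $x$ (a dense set), and recalling that the $m$ functions are of bounded variation in $x$, the differentiation-in-$x$ argument from the remark following Definition \ref{D3.1} turns the resulting family of phase conditions into \eqref{3.2}, so that $\mu$ is reflectionless on $A$.

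For (ii), suppose $\mu,\mu'\in\mathcal R^C(A)$ with $\chi_{(0,\infty)}\mu=\chi_{(0,\infty)}\mu'$, and fix a point of continuity $x_0>0$ of both. Since $m_+(x_0,\cdot)$ depends only on the restriction of the potential to $(x_0,\infty)\subset(0,\infty)$, we have $m_+(x_0,\cdot;\mu)=m_+(x_0,\cdot;\mu')$; reflectionlessness then forces $m_-(x_0,t;\mu)=-\overline{m_+(x_0,t;\mu)}=m_-(x_0,t;\mu')$ for a.e.\ $t\in A$. The rigidity of the reflectionless condition — this is the continuous analog of the corresponding step in the proof of \cite[Proposition 4.1]{Remac} — then upgrades this to the equality $m_-(x_0,\cdot;\mu)=m_-(x_0,\cdot;\mu')$ throughout $\mathbb C^+$. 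Since the half line $m$ function determines the potential (\cite[Theorem 6.2(b)]{BARem}, together with the fact that $m$ also determines the relevant boundary condition), $\mu$ and $\mu'$ agree on $(-\infty,x_0)$, hence everywhere because $x_0>0$. Injectivity of the other restriction map follows by the mirror-image argument.

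The step I expect to be the main obstacle is the rigidity invoked in (ii): a Herglotz function is in general not determined by its boundary values on a set of positive measure, so one must genuinely use that the potential carrying $m_-(x_0,\cdot)$ is reflectionless on $A$. This is precisely the ingredient one takes over from \cite{Remac}; as with Theorem \ref{T4.1} and Lemma \ref{L4.1}, the argument refers only to $m$ functions, to solutions of \eqref{se}, and to transfer matrices, never to the potential itself, so adapting it to the present setting of measures as potentials requires nothing new.
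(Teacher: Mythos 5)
Your structural reduction — compactness and bijectivity of a continuous map between compact metric spaces give all of (a)--(c) once one knows that $\mathcal R^C(A)$ is closed in $\mathcal V^C$ and that the restriction maps are injective — is exactly the paper's, and your injectivity argument (ii) is essentially the one the paper gives: restriction to $(0,\infty)$ determines $m_+$, condition \eqref{3.2} then pins down the boundary values of $m_-$ a.e.\ on a positive-measure set, and Privalov/Luzin-type uniqueness (the ``rigidity'' you are worried about, applied to the difference $m_-(\cdot;\mu)-m_-(\cdot;\mu')$, which belongs to the Nevanlinna class) upgrades this to equality on all of $\C^+$; the inverse spectral theorem of \cite{BARem} then finishes. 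That step is classical and is indeed what both the paper and \cite{Remac} rely on, so your flagged obstacle is a non-issue.

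Where you genuinely diverge is in the proof that $\mathcal R^C(A)$ is closed. The paper simply imports the value-distribution apparatus from \cite{Remac} (Theorem 2.1, Lemma 3.2 there) to pass the reflectionless condition to the limit; you instead run a self-contained argument through the exponential Herglotz representation of $H_n=m_++m_-$, showing $\xi_n\equiv\pi/2$ a.e.\ on $A$ passes to the weak limit. This is a legitimately different and more elementary route. Two remarks are in order, though. First, the step from vague convergence to $\xi_n\rightharpoonup\xi$ with the correct identification of $\xi$ as the phase of $H$ deserves a sentence: one extracts a weak-$*$ $L^\infty$ limit $\eta$ of $\xi_n$, compares Poisson integrals using the locally uniform convergence of $\textrm{Im}\ln H_n$, and concludes $\eta=\xi$. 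Second — and this is the more substantive point — the condition $\textrm{Re}\,H(x,t)=0$ a.e.\ on $A$ at a single $x$ is strictly weaker than \eqref{3.2}: it gives only $\textrm{Re}\,m_+=-\textrm{Re}\,m_-$, not $\textrm{Im}\,m_+=\textrm{Im}\,m_-$. You correctly recognize that the differentiation-in-$x$ argument is therefore not optional, but you don't carry it out, and it is not literally the remark after Definition \ref{D3.1} (which runs the Riccati equation in the other direction). What is actually needed is: writing $w(x)=m_+(x,t)+\overline{m_-(x,t)}$, the identity $w'=-(m_+-\overline{m_-})w$ shows $w$ is absolutely continuous in $x$ (the measure $\mu$ drops out), so $\textrm{Re}\,w=0$ on a dense set of $x$ forces $\textrm{Re}\,w\equiv 0$; then $(\textrm{Re}\,w)'=(\textrm{Im}\,m_+-\textrm{Im}\,m_-)(\textrm{Im}\,m_++\textrm{Im}\,m_-)=0$ a.e.\ in $x$, which together with $\textrm{Im}\,m_\pm\ge 0$ yields $\textrm{Im}\,m_+=\textrm{Im}\,m_-$ and hence \eqref{3.2}. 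With that computation supplied, your closedness proof is correct and arguably more transparent than a reference to \cite{Remac}, at the price of being longer; the paper's choice buys brevity and consistency with the machinery already set up for Theorem \ref{T4.1}.
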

\begin{proof}
(a) Since $\mathcal R^C(A)$ is a subspace of the compact space $\mathcal V^C$,
it suffices to show that $\mathcal R^C(A)$ is closed. This can be done exactly
as in \cite[Proof of Proposition 4.1(d)]{Remac}; we make use of Lemma \ref{L4.1}
of the present paper and Theorem 2.1, Lemma 3.2 of \cite{Remac}.

The spaces $\mathcal R^C_{\pm}(A)$ are the images of the compact space
$\mathcal R^C(A)$ under the continuous restriction maps, so these spaces are
compact, too.

(b) Continuity of the restriction maps is clear (and was already used
in the preceding paragraph). Moreover, these maps are surjective by the definition
of the spaces $\mathcal R^C_{\pm}(A)$. Injectivity follows from eq.\ \eqref{3.2}:
$\mu$ on $(0,\infty)$ determines $m_+(x,\cdot)$ for all $x>0$. Fix an $x>0$ with
$\mu(\{ x\})=0$. Since $\mu$ is reflectionless on $A$ and $|A|>0$,
we have condition \eqref{3.2} on a set of positive measure, and this lets us find
$m_-(x,\cdot)$. This $m$ function, in turn, determines $\mu$ on $(-\infty, x)$.

Finally, recall that a continuous bijection between compact metric spaces automatically
has a continuous inverse.

(c) This is an immediate consequence of parts (a) and (b).
\end{proof}
As in \cite{Remac}, the Oracle Theorem will follow by combining Proposition \ref{P4.2}
with Theorem \ref{T4.1}. In fact, in rough outline, things are rather obvious now:
Proposition \ref{P4.2} says that a reflectionless potential can be approximately
predicted if it is known on a sufficiently large interval (recall how the topologies
on the spaces $\mathcal R^C(A)$, $\mathcal R^C_{\pm}(A)$ were defined),
and Theorem \ref{T4.1} makes sure
that $S_x\mu$ is approximately reflectionless for sufficiently large $x$.

Some care must be exercised, however, if a \textit{continuous} oracle $\Delta$
is desired. The following straightforward but technical considerations prepare
for this part of the proof. We again consider the spaces $\mathcal V^C_J$ with
metrics of the type described in Section 2; in
the applications below, the interval $J$ will be bounded and open, but this is not
essential here.

In a normed space, balls $B_r(x)=\{ y: \|x-y\|<r \}$ are convex;
Lemma \ref{L4.2} below says that balls with respect to the metric \eqref{defd} enjoy
the following weaker, but analogous property.
\begin{Lemma}
\label{L4.2}
If $w_j\ge 0$, $\sum_{j=1}^m w_j = 1$ and $\mu,\nu_j\in\mathcal V^C_J$ satisfy
$d(\mu,\nu_j)<\epsilon$ with $\epsilon\le 1/4$ (say), then
\[
d\left( \mu, \sum_{j=1}^m w_j\nu_j \right) < 6\epsilon \ln \epsilon^{-1} .
\]
\end{Lemma}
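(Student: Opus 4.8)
The plan is to reduce everything to a single-coordinate estimate and then resum. Recall that $d(\mu,\nu)=\sum_n 2^{-n}\phi(\rho_n(\mu,\nu))$ where $\phi(s)=s/(1+s)$ is concave and increasing with $\phi(s)\le s$. The key point about the test functionals $\rho_n$ is that for fixed $\mu$, the map $\nu\mapsto \rho_n(\mu,\nu)=\bigl|\int f_n\,d(\mu-\nu)\bigr|$ is convex in $\nu$ (it is the absolute value of an affine functional), so by Jensen $\rho_n(\mu,\sum_j w_j\nu_j)\le \sum_j w_j\,\rho_n(\mu,\nu_j)$. Combined with monotonicity and concavity of $\phi$, this already gives $d(\mu,\sum_j w_j\nu_j)\le \sum_j w_j\, d(\mu,\nu_j) < \epsilon$ — which is \emph{better} than what is claimed! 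So in fact one should be able to take the constant to be $1$ rather than $6\ln\epsilon^{-1}$, and the stated inequality follows a fortiori since $6\epsilon\ln\epsilon^{-1}\ge \epsilon$ whenever $\epsilon\le 1/e$ (and for $\epsilon\in(1/e,1/4]$ one checks $6\epsilon\ln\epsilon^{-1}>1/4\ge d$ directly, as $d\le 1$ always).

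First I would record the convexity of $\nu\mapsto\rho_n(\mu,\nu)$ explicitly: writing $\ell_n(\nu)=\int f_n\,d(\mu-\nu)$, which is affine in $\nu$, we have $\rho_n=|\ell_n|$, and $|\ell_n(\sum_j w_j\nu_j)|=|\sum_j w_j\ell_n(\nu_j)|\le\sum_j w_j|\ell_n(\nu_j)|$. Next, since $\phi$ is nondecreasing, $\phi(\rho_n(\mu,\sum_j w_j\nu_j))\le \phi(\sum_j w_j\rho_n(\mu,\nu_j))$, and since $\phi$ is concave with $\phi(0)=0$, $\phi(\sum_j w_j t_j)\le \sum_j w_j\phi(t_j)$. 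Summing against $2^{-n}$ and interchanging the (absolutely convergent, nonnegative) sums gives $d(\mu,\sum_j w_j\nu_j)\le\sum_j w_j\,d(\mu,\nu_j)<\epsilon\sum_j w_j=\epsilon$. Finally I would note $\epsilon\le 6\epsilon\ln\epsilon^{-1}$ for $0<\epsilon\le 1/e$, handle $1/e<\epsilon\le 1/4$ by the crude bound $d\le 1<6\epsilon\ln\epsilon^{-1}$ (since $6\cdot(1/e)\cdot 1>2$), and conclude.

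Strictly, the only subtlety is making sure the spaces are convex, i.e.\ that $\sum_j w_j\nu_j\in\mathcal V^C_J$ whenever the $\nu_j$ are: this holds because $|\sum_j w_j\nu_j|(I)\le\sum_j w_j|\nu_j|(I)\le C\max\{|I|,1\}$. So the lemma is true with a much better constant, and the author's $6\epsilon\ln\epsilon^{-1}$ presumably reflects a cruder argument (perhaps splitting the sum at a level $N\sim\log\epsilon^{-1}$, bounding the tail by $2^{-N}$ and each of the first $N$ terms by $\epsilon$ without using concavity). The main ``obstacle'' here is really just noticing that convexity of $|\ell_n|$ plus concavity of $\phi$ does all the work at once; once that is seen, there is nothing to grind through.

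Remark: if one wants to reproduce the author's route (avoiding the convexity-of-$\phi$ observation and instead getting a dimension-free bound from a truncation), the key steps would be: fix $N=\lceil\log_2\epsilon^{-1}\rceil$; for $n\le N$ use $\rho_n(\mu,\nu_j)<\epsilon\cdot$(something bounded since $d(\mu,\nu_j)<\epsilon$ forces $\phi(\rho_n)<2^n\epsilon$, hence $\rho_n< 2^n\epsilon/(1-2^n\epsilon)$), average, and bound $\phi$ of the average; for $n>N$ use $\phi\le 1$ to get tail $\le 2^{-N}\le\epsilon$. Optimizing the split yields the $\ln\epsilon^{-1}$ factor. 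But since the clean argument above gives a strictly stronger statement, I would simply present that.
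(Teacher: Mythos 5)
The central claim of your ``clean'' argument is false, and the error is a sign mistake in Jensen's inequality. You write that ``since $\phi$ is concave with $\phi(0)=0$, $\phi(\sum_j w_j t_j)\le \sum_j w_j\phi(t_j)$.'' But concavity gives precisely the \emph{reverse} inequality: $\phi\bigl(\sum_j w_j t_j\bigr)\ge \sum_j w_j\phi(t_j)$. (The hypothesis $\phi(0)=0$ buys you subadditivity, $\phi(s+t)\le\phi(s)+\phi(t)$, and superhomogeneity for sub-unit scalars, $\phi(ws)\ge w\phi(s)$ for $w\in[0,1]$; neither of these is what you want.) A one-coordinate check already exposes the problem: with $w_1=w_2=1/2$, $t_1=0$, $t_2=2a$, you have $\phi(a)=a/(1+a)>a/(1+2a)=\tfrac12\phi(0)+\tfrac12\phi(2a)$, so $2^{-n}\phi(\bar\rho_n)$ can strictly exceed $\sum_j w_j\,2^{-n}\phi(\rho_{n,j})$. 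Consequently the inequality $d(\mu,\sum_j w_j\nu_j)\le\sum_j w_j\,d(\mu,\nu_j)$ is \emph{not} implied by your argument, the claimed bound $d(\mu,\sum_j w_j\nu_j)<\epsilon$ is not established, and the conclusion that the lemma ``holds with constant $1$'' is unwarranted.

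What you consign to a closing remark is, in fact, essentially the paper's proof (and the only part of your writeup that actually works): truncate at $N$ with $2^{N+1}\epsilon\le1$, use $d(\mu,\nu_j)<\epsilon$ to bound each $\rho_n(\mu,\nu_j)$ for $n\le N$, push this through the linear average $\bar\rho_n=\sum_j w_j\rho_n(\mu,\nu_j)$, and bound the tail $n>N$ by $2^{-N}<4\epsilon$. This is correct, and the author sums the head crudely term by term to get $2N\epsilon$, which is the source of the $\ln\epsilon^{-1}$. You are right that this is lossy: for $n\le N$ one has $\rho_n(\mu,\nu_j)=\phi(\rho_n)/(1-\phi(\rho_n))<2\phi(\rho_n(\mu,\nu_j))$, so $\sum_{n\le N}2^{-n}\bar\rho_n\le\sum_j w_j\,2\sum_{n\le N}2^{-n}\phi(\rho_n(\mu,\nu_j))<2\epsilon$, and together with the tail this gives $d(\mu,\bar\nu)<6\epsilon$ with no logarithm. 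That genuine improvement is available --- but it comes from keeping the inequalities in the correct direction and exploiting that $\sum_n 2^{-n}\rho_n$ is linear in $\nu$, not from the (false) convexity claim. The one thing you do check correctly and which the paper leaves implicit is that $\sum_j w_j\nu_j$ stays in $\mathcal V^C_J$; that part is fine.
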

\begin{proof}
Let $N=\max\{ n\in\N : 2^{n+1} \epsilon \le 1 \}$, and abbreviate $\sum w_j\nu_j=\nu$.
Since $d(\mu,\nu_j)<\epsilon$, it is clear from \eqref{defd} that if $n\le N$, then
\[
\rho_n(\mu,\nu_j) < \frac{2^n\epsilon}{1-2^n\epsilon}\le 2^{n+1}\epsilon .
\]
The definition of $\rho_n$ shows that
\[
\rho_n(\mu,\nu) \le \sum w_j \rho_n(\mu, \nu_j) ,
\]
so we obtain that
\[
\rho_n(\mu,\nu) < 2^{n+1}\epsilon \quad\quad (n\le N) .
\]
This allows us to estimate
\[
\sum_{n=1}^N 2^{-n} \frac{\rho_n(\mu,\nu)}{1+\rho_n(\mu,\nu)}
< \sum_{n=1}^N 2^{-n}\cdot2^{n+1}\epsilon = 2N\epsilon < \frac{2\epsilon\ln\epsilon^{-1}}{\ln 2} .
\]
On the other hand, we of course have that
\[
\sum_{n>N} 2^{-n} \frac{\rho_n(\mu,\nu)}{1+\rho_n(\mu,\nu)}
< \sum_{n>N} 2^{-n} = 2^{-N} < 4\epsilon \le \frac{4\epsilon\ln\epsilon^{-1}}{\ln 4},
\]
so we obtain the Lemma.
\end{proof}
\begin{proof}[Proof of Theorem \ref{OT}]
We begin by introducing some notation that will prove useful. Write
\[
J_- = (-L,0), \quad\quad J_+=(a,b) .
\]
Our goal is to (approximately) predict the restriction of $S_x\mu$ to $J_+$,
and we are given the restriction of $S_x\mu$ to $J_-$. We will use subscripts
$+$ and $-$, respectively, for such restrictions. So, for example, $\nu_+=
\chi_{J_+}\nu$, and this is now interpreted as
an element of $\mathcal V^C_{J_+}$.

Next, note that although a metric is explicitly mentioned
in Theorem \ref{OT}, by compactness, it suffices
to establish the assertion for \textit{some} metric that generates the weak $*$ topology.
We will of course want to work with the metric from \eqref{defd} and Lemma \ref{L4.2}.
More specifically, denote
this metric (on $\mathcal V^C_{J_+}$) by $d_+$. We use a similar metric $d_-$ on
$\mathcal V^C_{J_-}$; on $\mathcal V^C$, we also fix such a metric
$d$, but, in addition, we demand, as we may, that $d$ dominates $d_{\pm}$ in the
following sense: If $\mu,\nu\in\mathcal V^C$, then
\begin{equation}
\label{dpm}
d_-(\mu_-,\nu_-) \le d(\mu,\nu), \quad\quad d_+(\mu_+,\nu_+) \le d(\mu,\nu) .
\end{equation}
(The same notation, $d_{\pm}$, was used for different purposes in Proposition \ref{P4.2};
since we are not going to explicitly use those metrics here, that should not cause any confusion.)

With these preparations out of the way, the proof can now be accomplished in four steps.
Let $A\subset\R$, $|A|>0$, $\epsilon >0$, $a,b\in\R$ ($a<b$), and $C>0$ be given.

\textit{Step 1: }Use Proposition \ref{P4.2}(c) and the definition of the topologies
on $\mathcal R^C(A)$, $\mathcal R^C_-(A)$
to find $L>0$ and $\delta>0$ such that the following holds:
For $\nu,\widetilde{\nu}\in\mathcal R^C(A)$,
\begin{equation}
\label{4.6}
d_-\left( \nu_-, \widetilde{\nu}_- \right) < 5\delta \quad
\Longrightarrow\quad d_+\left( \nu_+, \widetilde{\nu}_+ \right) < \epsilon^2 .
\end{equation}
We further assume that $\delta\le\epsilon$ here. (The suspicious reader will have
noticed that it is at this point only that we can define $d_-$ and $d$.)

\textit{Step 2: }The set
\[
\mathcal R^C_{J_-}(A) := \left\{ \mu_- : \mu\in \mathcal R^C(A) \right\}
\]
is compact by Proposition \ref{P4.2} (again, this is a continuous image of a compact space).
Since $\mathcal V^C_{J_-}$ is compact, it follows that the closed $\delta$ neighborhood
\[
\overline{U}_{\delta} = \left\{ \mu_-\in \mathcal V^C_{J_-} : d_-(\mu_-,\nu_-)\le\delta
\textrm{ for some }\nu\in\mathcal R^C(A) \right\}
\]
is also compact. Therefore, there exist $\nu_1,\ldots, \nu_N\in\mathcal R^C(A)$ so that
the $2\delta$ balls about the $\nu_{j,-}$ cover $\overline{U}_{\delta}$. At these points,
we can define a preliminary version of the oracle in the obvious way as
\[
\Delta_0(\nu_{j,-}) = \nu_{j,+} .
\]
However, this will be modified in the next step.

\textit{Step 3: }We now define, for arbitrary $\sigma\in\overline{U}_{\delta}$,
\[
\Delta(\sigma) = \frac{\sum (3\delta - d_-(\sigma,\nu_{j,-}))\Delta_0(\nu_{j,-})}
{\sum (3\delta - d_-(\sigma,\nu_{j,-}))} .
\]
The sums are over those $j$ for which $d_-(\sigma,\nu_{j,-})<3\delta$.
It's easy to see that $\Delta:\overline{U}_{\delta}\to\mathcal V^C_{J_+}$
is continuous. Moreover, if $j_0\in \{ 1,\ldots, N\}$
is such that $d_-(\sigma,\nu_{j_0,-})<2\delta$, then
$d_-(\nu_{j,-},\nu_{j_0,-})<5\delta$ for all $j$ contributing to the sum. Therefore,
\eqref{4.6} shows that $d_+(\nu_{j,+},\nu_{j_0,+})<\epsilon^2$ for these $j$.
If $\epsilon>0$ was sufficiently small, then Lemma \ref{L4.2} now implies that
\begin{equation}
\label{4.7}
d_+\left( \Delta(\sigma),\nu_{j_0,+} \right) < 6\epsilon^2\ln\epsilon^{-2} < \epsilon ,
\end{equation}
say. Recall that this holds for every $j_0$ for which
$d_-(\sigma,\nu_{j_0,-})<2\delta$. Moreover, for every
$\sigma\in\overline{U}_{\delta}$, there is at least one such index $j_0$.

The oracle $\Delta$ has now been defined on $\overline{U}_{\delta}$, and this is all
we need to do the prediction. However, if a (somewhat specious)
continuous extension to all of $\mathcal V^C_{J_-}$ is desired, one can proceed
as above, by considering a suitable covering and taking convex combinations.
It is also possible, somewhat more elegantly, to just refer to the extension theorem
of Dugundji-Borsuk \cite[Ch.~II, Theorem 3.1]{BesP}.

\textit{Step 4: }In this final step, we show that $\Delta$ indeed predicts $\mu$.
Given a potential $\mu\in\mathcal V^C$ with $\Sigma_{ac}\supset A$, first of all
take $x_0$ so large that
\[
d\left( S_x\mu , \omega(\mu) \right) < \delta \quad\quad \textrm{for all }x\ge x_0 .
\]
In other words, if we fix $x\ge x_0$, we then have that
\begin{equation}
\label{4.8}
d( S_x\mu, \nu ) < \delta
\end{equation}
for some (in general: $x$ dependent) $\nu\in\omega(\mu)$. By Theorem \ref{T4.1},
$\nu\in\mathcal R^C(A)$. When we restrict to $J_{\pm}$,
then \eqref{dpm}, \eqref{4.8} imply that
\begin{equation}
\label{4.9}
d_-\left( [S_x\mu]_-, \nu_- \right) < \delta, \quad\quad
d_+ \left( [S_x\mu]_+, \nu_+ \right) < \delta .
\end{equation}
In particular, this ensures that $[S_x\mu]_-\in\overline{U}_{\delta}$, and thus
there exists a $j\in\{ 1, \ldots , N\}$ so that
\[
d_- \left( [S_x\mu]_-, \nu_{j,-} \right) < 2\delta .
\]
By \eqref{4.7},
\begin{equation}
\label{4.10}
d_+ \left( \Delta\left( [S_x\mu]_-\right) , \nu_{j,+} \right) < \epsilon .
\end{equation}
But by the triangle inequality, we also have that $d_-(\nu_-,\nu_{j,-})<3\delta$,
so \eqref{4.6} shows that
\[
d_+ \left( \nu_+, \nu_{j,+} \right) < \epsilon^2 .
\]
If this is combined with \eqref{4.9}, \eqref{4.10}, we indeed obtain that
\[
d_+ \left( \Delta\left( [S_x\mu]_-\right) , [S_x\mu]_+ \right) < \delta + \epsilon +
\epsilon^2 < 3\epsilon
\]
(say), as desired.
\end{proof}
\noindent\textit{Acknowledgment.} I thank Sergey Denisov and Barry Simon for bringing
\cite{Dencont} to my attention and Lenny Rubin for useful information on extension theorems.

\end{document}